\theoremstyle{plain} 
\newtheorem{theorem}{Theorem}
\newtheorem{proposition}{Proposition}
\newtheorem{lemma}[theorem]{Lemma}
\numberwithin{equation}{section}
\def\CC{\mathbb C}
\def\QQ{\mathbb Q}
\def\RR{\mathbb R}
\def\ZZ{\mathbb Z}
\def\Re{\operatorname{Re}}
\def\Im{\operatorname{Im}}
\def\us{\underset}
\def\conj{\overline} 
\def\mf{\mathfrak}
\renewcommand{\pod}[1]{\allowbreak\mathchoice
  {\if@display \mkern 18mu\else \mkern 8mu\fi (#1)}
  {\if@display \mkern 18mu\else \mkern 8mu\fi (#1)}
  {\mkern4mu(#1)}
  {\mkern4mu(#1)}
}
\begin{document}

\baselineskip=17pt

\title{On Eisenstein primes}
\author{Mayank Pandey}
\maketitle  
\section{Introduction and statement of results}

In this paper, we prove the following result:
\begin{theorem}

$$\underset{\ell^2 + \ell m + m^2 \le x}{\sum\sum} \Lambda(2\ell - m)\Lambda(\ell^2 - \ell m + m^2) \sim \sigma x$$

for some $\sigma > 0$.

\end{theorem}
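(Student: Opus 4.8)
The plan is to read the two forms in the summand through the arithmetic of the Eisenstein integers $\ZZ[\omega]$, $\omega = e^{2\pi i/3}$. Writing $z = \ell + m\omega$, one has the norm $N(z) = z\conj z = \ell^2 - \ell m + m^2$ and the trace $\operatorname{Tr}(z) = z + \conj z = 2\ell - m$, so the summand is exactly $\Lambda(\operatorname{Tr} z)\,\Lambda(N z)$, while the constraint $\ell^2 + \ell m + m^2 \le x$ cuts out an ellipse of area $\tfrac{2\pi}{\sqrt3}\,x$ carrying $\asymp x$ lattice points, on which $N(z)$ ranges up to $\asymp x$. The sum therefore counts Eisenstein integers $z$, weighted so as to detect simultaneously that the norm $N(z)$ is (a power of) a rational prime and that the trace $\operatorname{Tr}(z)$ is prime. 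Since $\Lambda(N z)$ is supported on prime powers and proper prime powers contribute $O(\sqrt x\,\log^2 x)$, and similarly for the trace, I would first discard these, reducing to counting Eisenstein primes $\pi$ (those with $N\pi = p$ a rational prime) of norm $\le x$ whose trace $\operatorname{Tr}\pi = 2\ell - m$ is a rational prime. This is the Eisenstein analogue of the Fouvry--Iwaniec theorem on Gaussian primes with a prime coordinate, and I would build the argument on that template.

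Next I would insert a combinatorial identity of Vaughan or Heath--Brown type for the linear factor $\Lambda(2\ell - m)$ --- the trace, being a linear form, is the one easily twisted by congruences --- thereby splitting the sum into Type I pieces $\sum_{d\le D}\gamma_d \sum_{z:\,d\mid \operatorname{Tr} z}\Lambda(N z)$, in which the trace is pinned to a residue class, and genuinely bilinear Type II pieces $\sum_{a}\sum_{b}\alpha_a\beta_b\bigl(\cdots\bigr)\Lambda(N z)$. The factor $\Lambda(N z)$ is kept throughout and evaluated via the dictionary between norm-prime $z$ and Eisenstein primes, i.e.\ through the Hecke $L$-functions of $\QQ(\omega)$. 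The expected main term $\sigma x$ is assembled from the Type I main terms together with the sieve's fundamental lemma; here $\sigma$ is a singular series, an Euler product whose $p$-th factor measures the density of $(\ell,m)$ for which $\operatorname{Tr} z$ and $N z$ avoid $p$, and one checks each local factor is positive, so that $\sigma > 0$ (there being no local obstruction to $N\pi$ and $\operatorname{Tr}\pi$ being simultaneously prime).

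For the Type I sums the task is an asymptotic, with power-saving error and level of distribution $d \le x^{1/2-\epsilon}$, for $\sum_{z}\Lambda(N z)$ restricted to the sublattice $\operatorname{Tr} z \equiv 0 \pmod d$ inside the ellipse. Equivalently this is the prime ideal theorem for $\QQ(\omega)$ with a congruence condition on the trace: ray-class (Hecke) characters modulo $(d)$ detect the trace congruence, and since the counting region is essentially radial there is no sector constraint to impose. Individual equidistribution follows from standard zero-free regions for these $L$-functions; the uniformity in $d$ needed to reach level $x^{1/2-\epsilon}$ comes from a large-sieve (Bombieri--Vinogradov) input for Hecke characters, and counting the solutions of the norm congruences that arise is routine lattice-point bookkeeping.

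The crux, and the step I expect to dominate the work, is the Type II estimate: non-trivial cancellation in the bilinear forms over a genuinely bilinear range of $a$ and $b$. Here I would open the inner sum over Eisenstein integers, apply Cauchy--Schwarz to eliminate the unknown coefficients, and reduce to correlations $\sum_{z,\,z'}\Lambda(N z)\Lambda(N z')$ over pairs $z,z'$ whose traces are tied together through the bilinear variables and whose norms lie in a common short range. The algebraic identity
$$N(z) - N(z') = N(z - z') + \operatorname{Tr}\bigl((z - z')\conj{z'}\bigr)$$
then linearizes the norm difference, after which Poisson summation over the lattice and cancellation in the resulting Gauss- and Kloosterman-type sums --- the Eisenstein counterpart of the Fouvry--Iwaniec bilinear estimate --- should beat the main term, provided the level from the Type I analysis and the bilinear range are balanced. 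Carrying the norm-primality weight $\Lambda(N z)$ coherently through the Cauchy--Schwarz while extracting this cancellation is where the genuine difficulty lies; the remaining steps are comparatively standard sieve bookkeeping.
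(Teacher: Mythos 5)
Your algebraic setup (trace and norm in $\ZZ[\omega]$, removal of prime powers, the shape of the singular series) agrees with the paper, but the core of your plan attaches the combinatorial decomposition to the wrong factor, and this is fatal at the Type II step. You propose to expand $\Lambda(2\ell - m)$ --- the linear (trace) factor --- by Vaughan/Heath--Brown, and to carry $\Lambda(Nz)$ --- the quadratic (norm) factor --- as an inert weight through the resulting bilinear forms. The paper does the reverse, as do Fouvry--Iwaniec in [FoI] and Friedlander--Iwaniec in [FI2]: the primality of the norm $n = \ell^2 - \ell m + m^2$ is what gets decomposed (via the asymptotic sieve for primes, Theorem 18.6 of [FI2], applied to the sequence $a_n = \sum_{\ell^2 - \ell m + m^2 = n}\gamma_{2\ell - m}$), while the trace-primality weight $\gamma_{2\ell - m}$ rides along inside $a_n$. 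This orientation matters because of what survives Cauchy--Schwarz in the bilinear estimate: in the paper's Section 4, after factoring $a_{mn}$ in $\ZZ[\omega]$ and applying Cauchy--Schwarz, the correlation to be evaluated is $S(\mf m_1, \mf m_2) = \sum_{\mf n}\gamma(\mf n\mf m_1)\gamma(\mf n\mf m_2)$, a sum of $\gamma_{\ell_1}\gamma_{\ell_2}$ over \emph{linear} prime variables $\ell_1, \ell_2$ linked by the congruence $\ell_1\mf m_2 \equiv \ell_2\mf m_1 \pmod{\Delta}$; that is a statement about primes in arithmetic progressions on average, supplied exactly by Proposition 20.9 of [FI1] (a Bombieri--Vinogradov type theorem) together with Siegel--Walfisz for Hecke characters.

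In your orientation the analogous step collapses. Your Type II sums fix $\operatorname{Tr} z = ab$; since then $m = 2\ell - ab$, the point $z$ lies on a line and $Nz = 3\ell^2 - 3ab\ell + (ab)^2$ is a one-variable quadratic polynomial, so after Cauchy--Schwarz in $b$ you need a power-saving asymptotic for
$$\sum_{b}\sum_{z,\,z'}\bigl[\operatorname{Tr} z = ab\bigr]\bigl[\operatorname{Tr} z' = a'b\bigr]\Lambda(Nz)\Lambda(Nz'),$$
a pair correlation of primes represented by quadratic polynomials. No existing technique touches this: even the single sum $\sum_{\ell}\Lambda(3\ell^2 - 3t\ell + t^2)$ for fixed $t$ is an open problem of Landau type, and the bilinear averaging over $t = ab$ does not rescue it by the tools you invoke. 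Your identity $N(z) - N(z') = N(z - z') + \operatorname{Tr}\bigl((z - z')\conj{z}'\bigr)$ is true but beside the point: Poisson summation and Gauss/Kloosterman-sum cancellation detect equidistribution in congruence classes, not the simultaneous primality of $Nz$ and $Nz'$, which is what the weight $\Lambda(Nz)\Lambda(Nz')$ demands. The ``Fouvry--Iwaniec bilinear estimate'' you appeal to is precisely an estimate for $\gamma_{\ell_1}\gamma_{\ell_2}$-correlations with the prime variables in linear position; nothing of the sort exists with the primes in quadratic position. To repair the proposal you must swap the roles: sieve-decompose $\Lambda$ on the norm and keep $\gamma_{2\ell - m}$ as the carried weight; then your Type I analysis becomes the paper's Section 3 (equidistribution of the roots of $v^2 + 3 \equiv 0 \pmod{d}$ via well-spacing and the Davenport--Halberstam large sieve), and your Type II becomes the paper's Section 4.
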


We shall prove Theorem 1 by following along the lines of the proof of Theorem 20.3 in \cite{FI2}, by using $\QQ(\omega)$ 
rather than $\QQ(i)$ when working with the bilinear forms that arise in Section 20.4 of \cite{FI2}. A related result was proved
by Fouvry and Iwaniec in \cite{FoI} where it is shown that there are infinitely many primes of the form $\ell^2 + m^2$ such that 
$\ell$ is prime.

\section{Preliminaries}
Let $\gamma_\ell = \log \ell$ when $\ell$ is a prime greater than 2 and $0$ otherwise. 
Then, let $$a_n = \sum_{\ell^2 - \ell m + m^2 = n}\gamma_{2\ell - m} = \sum_{r^2 + 3s^2 = 4n}\gamma_{r}.$$

Let $$A(x) = \sum_{n\le x} a_n$$ 
and let $$A_d(x) = \underset{n\equiv 0\pmod{d}}{\sum_{n\le x}} a_n$$

Let $\rho(d) = |\{v\in\ZZ/(d) : v^2 + 3 \equiv 0\pmod{d}\}|$.

We expect that $A_d(x)$ is well approximated by 
$$M_d(x) = \frac{\rho(4d)}{4d} \sum_{r\le\sqrt{4x}}\frac{1}{2}\gamma_r \sqrt{\frac{4x - r^2}{3}}$$
so we let the remainder terms $r_d(x)$ be such that 
$$A_d(x) = M_d(x)+ r_d(x)$$
For $d$ even, this is clearly equal to 0, while for $d$ odd, since $\rho(d)$ is multiplicative, this is equal to 
$$\frac{\rho(d)}{4d} \sum_{r\le\sqrt{4x}}\gamma_r \sqrt{\frac{4x - r^2}{3}}$$

We then have the following:

\begin{proposition}

Suppose that for some $\sqrt{x} < D\le x(\log x)^{-20},$ 
\begin{equation}R(x; D) = \sup_{y\le x} \sum_{d\le D} |r_d(y)|\ll  A(x)\log^{-2} x\end{equation}
and let 
\begin{equation}T(x; D) = 
\sum_{\ell\le D}\left\lvert \underset{xD^{-1} < z\le x^2D^{-2}}{\sum_{\ell m \le x}}a_{\ell m}\mu(m)\right\rvert\end{equation}

Then, we have that
\begin{equation}
\sum_{n\le  x} a_n\Lambda(n) = HA(x)\left\{1 + O((\log x)^{-1})\right\} + O(T(x, D)\log x)
\end{equation}
where $A(x) = A_1(x), g(d) = M_d(x)/A(x)$, and $$H = \prod_{p}(1 - g(p))\left(1 - \frac{1}{p}\right)^{-1}$$
\end{proposition}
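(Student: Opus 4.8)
The plan is to run the asymptotic sieve for primes, following the proof of Theorem~20.3 of \cite{FI2}. I would begin from the elementary identity $\Lambda(n) = -\sum_{d\mid n}\mu(d)\log d$ (valid for $n>1$), which gives
$$\sum_{n\le x}a_n\Lambda(n) = -\sum_{d\le x}\mu(d)(\log d)\,A_d(x).$$
The entire argument is then organized according to the size of the Möbius variable $d$, the goal being to express everything in terms of only two types of contribution: \emph{Type I} sums, in which the modulus is at most $D$ and the level-of-distribution hypothesis on $R(x;D)$ applies, and a single \emph{Type II} bilinear form supported on the medium range $xD^{-1}<d\le x^2D^{-2}$, which is precisely $T(x;D)$. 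To achieve this clean split I would use a Vaughan--Heath-Brown-type combinatorial identity with parameters tuned to these ranges, reflecting large divisors $d$ through their complementary divisor $\ell=n/d$ (note $\ell<D$ whenever $d>xD^{-1}$, since $D>\sqrt{x}$) so that no range is left unaccounted for.

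For the Type I contributions, where the modulus runs up to $D$, I would substitute $A_d(x)=M_d(x)+r_d(x)=g(d)A(x)+r_d(x)$. The remainder terms contribute at most $(\log x)\sum_{d\le D}|r_d(x)|\le (\log x)R(x;D)\ll A(x)(\log x)^{-1}$ by hypothesis, which is safely inside the claimed error; the principal parts $g(d)A(x)$ carry the main term.

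The medium range is where level of distribution fails and cancellation in $\mu$ becomes essential. Writing $n=\ell m$ with $m=d\in(xD^{-1},x^2D^{-2}]$ forces the cofactor $\ell=n/m\le D$, and removing the smooth weight $\log m$ by partial summation --- which costs a single factor $\log x$ --- reduces this range to $\sum_{\ell\le D}|\sum_m \mu(m)a_{\ell m}|=T(x;D)$, contributing $O(T(x;D)\log x)$ and matching the error term in the statement.

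It then remains to assemble the main term from the Type I principal parts. Using the multiplicativity of $g(d)=M_d(x)/A(x)$ together with Mertens-type estimates and partial summation, the leading contribution should collapse to $HA(x)\{1+O((\log x)^{-1})\}$, with the factors $(1-1/p)^{-1}$ in $H=\prod_p(1-g(p))(1-1/p)^{-1}$ arising from the prime normalization carried by the $\log$-weight in $\Lambda$, layered on top of the pure sieve density $\prod_p(1-g(p))$. I expect the two genuinely hard points to be, first, the constant-chasing: the naively truncated Möbius sum does not by itself return $H$, so a fundamental-lemma-style resummation is needed for the spurious factors to cancel and leave precisely $H$; and second, and more fundamentally, the combinatorial localization of the non-main contribution to exactly the medium range, with clean boundary matching at $xD^{-1}$ and $x^2D^{-2}$. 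This second point is where the parity obstruction surfaces: level of distribution alone cannot detect primes, so the Type II cancellation encoded in $T(x;D)$ is indispensable, and the whole scheme is arranged so that it is the only input beyond $R(x;D)$ required to conclude.
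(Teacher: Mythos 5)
Your proposal diverges fundamentally from the paper: the paper's entire proof of this proposition is a citation --- the statement \emph{is} Theorem 18.6 of \cite{FI2} (the Friedlander--Iwaniec asymptotic sieve for primes) specialized to the sequence $(a_n)$ --- whereas you attempt to reprove that theorem from scratch. That would be fine if the sketch worked, but the mechanism you propose cannot deliver the decomposition you need, and the two points you yourself flag as ``genuinely hard'' are not loose ends: they are the entire content of the theorem, whose proof occupies a full chapter of \cite{FI2} (originally a long Annals paper). Deferring them leaves your proposal as a restatement of the result's shape rather than a proof.

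The concrete failure is in the combinatorial step. Starting from $\Lambda(n)=-\sum_{d\mid n}\mu(d)\log d$, the terms with $d\le D$ are indeed Type I and are absorbed by the hypothesis on $R(x;D)$, as you say. But for $d>D$, writing $n=\ell d$ with $\ell=n/d<x/D$, the bilinear form you obtain has the M\"obius-weighted variable $d$ ranging over $(D,x/\ell]$, i.e.\ \emph{large} values, while $T(x;D)$ requires $\mu$ on a variable confined to the narrow low range $(xD^{-1},x^2D^{-2}]$ --- for $D$ near $x(\log x)^{-20}$ this is an interval of merely polylogarithmic size. No choice of parameters in a Vaughan-type identity repairs this: taking $U=xD^{-1}$, $V=D^2x^{-1}$, the Type I pieces do have moduli $\le UV=D$ and the Type II piece $\mu_{>V}*\Lambda_{>U}*1$ does have a variable in $(U,x/V)=(xD^{-1},x^2D^{-2})$, but that variable carries the weight $\Lambda$, while the $\mu$-weighted variable ranges over $(D^2x^{-1},x/U)$, i.e.\ again large values; neither grouping matches the structure of $T(x;D)$, and the mismatched ranges have no available estimate (no level of distribution beyond $D$, no bilinear input outside the window). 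This is precisely the parity obstruction you invoke, and it is why Friedlander and Iwaniec do not argue via a combinatorial identity at all: their proof uses fundamental-lemma pre-sieving (whence the coprimality to small primes built into their bilinear hypothesis), delicate sieve decompositions, and several auxiliary regularity hypotheses on the sequence, none of which appear in your sketch. If you want a self-contained argument you must reproduce that machinery; otherwise the correct proof is the paper's one-liner --- quote Theorem 18.6 of \cite{FI2} and verify its hypotheses for this particular $(a_n)$.
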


\begin{proof}
This is Theorem 18.6 in \cite{FI2} for our particular sequence.
\end{proof}

\section{The remainder term}

In this section, we verify that (2.1) holds. From this point on $e(\alpha) = e^{2\pi i\alpha}$. 
First, we study the distribution of the roots of the congruence $v^2 + 3 \equiv 0\pmod{d}$ by studying Weyl sums related 
to these quadratic roots. In order to do so, we will establish a well-spacing of the points $v/d\pmod{1}$.
It is easy to show that for odd $d$, the roots to $v^2 + 3\equiv 0\pmod{d}$ each correspond to a representation
$$d = r^2 + rs + s^2 = \frac{(r - s)^2 + 3(r + s)^2}{4}$$ subject to
$(r, s) = 1, -r - s < r - s \le r + s$ where $v(r - s) \equiv (r + s)\pmod{d}$. 

It then follows that $$\frac{v}{d} \equiv -\frac{4(\overline{r - s})}{r + s} + \frac{r - s}{d(r + s)}\pmod{1}$$
where $\overline{r - s}$ is such that $(r - s)(\overline{r - s}) \equiv  1\pmod{r + s}$.

Note that we then have that $$\frac{|r - s|}{d(r + s)} < \frac{1}{2(r + s)^2}$$

Now, restrict $d$ to the range $4D < d \le 9D$. It then follows that $2D^{1/2} < r + s < 3D^{1/2}$, so for any two points 
$v_1/d_1, v_2/d_2$, $\max\left\{\frac{r_1 + s_1}{r_2 + s_2}, \frac{r_2 + s_2}{r_1 + s_1}\right\}\le \frac{3}{2}$
$$\left\lVert\frac{v_1}{d_1} - \frac{v_2}{d_2}\right\lVert > \frac{4}{(r_1 + s_1)(r_2 + s_2)} -
\max\left\{\frac{1}{(r_1 + s_1)^2}, \frac{1}{(r_2 + s_2)^2}\right\} \gg \frac{1}{D}$$


Then by the large sieve inequality of Davenport and Halberstam, we have the following
\begin{lemma} 
For all $\alpha_1, \alpha_2, \dots \in\CC$, we have that
$$
\underset{d\equiv 1\pmod{2}}{\sum_{D <  d\le 2D}}\sum_{v^2 + 3\equiv 0\pmod{d}} \left\lvert\sum_{n\le N} 
\alpha_n e\left(\frac{vn}{d}\right)\right\rvert^2\ll (D + N) \left(\sum_{n}\alpha_n^2\right).
$$
\end{lemma}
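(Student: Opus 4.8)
The plan is to deduce the lemma directly from the large sieve inequality of Davenport and Halberstam, whose content is that if $\xi_1, \xi_2, \dots$ are points of $\RR/\ZZ$ that are $\delta$-spaced, in the sense that $\lVert \xi_i - \xi_j \rVert \ge \delta$ whenever $i \ne j$, then for any complex coefficients $(\alpha_n)$ supported on $n \le N$ one has
\[ \sum_i \left\lvert \sum_{n \le N} \alpha_n e(\xi_i n) \right\rvert^2 \le (N + \delta^{-1}) \sum_n \lvert \alpha_n \rvert^2. \]
Everything therefore reduces to exhibiting a separation $\delta \gg 1/D$ for the relevant points, after which $\delta^{-1} \ll D$ yields exactly the claimed bound $\ll (D + N) \sum_n \lvert \alpha_n \rvert^2$.

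Concretely, I would take the $\xi$ to be the family of all $v/d \bmod 1$ as $d$ runs over odd integers in $(D, 2D]$ and, for each such $d$, $v$ runs over the solutions of $v^2 + 3 \equiv 0 \pmod d$, so that each pair $(d,v)$ contributes one point and the inner double sum of the lemma is precisely $\sum_i \lvert \sum_{n \le N} \alpha_n e(\xi_i n)\rvert^2$. It then remains only to check that these points are $\gg 1/D$-separated, which is exactly the well-spacing established above: through the correspondence between roots $v$ and representations $d = r^2 + rs + s^2$ subject to $(r,s)=1$ and $-r-s < r-s \le r+s$, one writes $v/d \equiv -4\overline{(r-s)}/(r+s) + (r-s)/(d(r+s)) \pmod 1$, and the difference of two distinct points is bounded below by $4/((r_1+s_1)(r_2+s_2)) - \max\{1/(r_1+s_1)^2, 1/(r_2+s_2)^2\}$. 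The only bookkeeping point is that this separation was recorded for $d$ in the window $4D < d \le 9D$, whereas the lemma concerns $D < d \le 2D$; I would rerun the identical estimate in the narrower window, where the identity $4d = (r-s)^2 + 3(r+s)^2$ forces $r+s \asymp \sqrt D$ with the ratio of any two values of $r+s$ staying below $\sqrt 2$. Since this ratio is comfortably smaller than $4$, the main term $4/((r_1+s_1)(r_2+s_2))$ dominates the error, the difference is $\gg 1/D$, and so $\delta \gg 1/D$, whence the large sieve gives the lemma.

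The main obstacle, and the only place that genuinely requires care, is this well-spacing, and two features of it deserve attention. First, one must know that distinct pairs $(d,v)$ yield genuinely distinct points, since a collision would give zero spacing and destroy the $\delta$-separation; this is subsumed by the lower bound, but in the boundary case where two points share the denominator $r+s$ it is cleanest to note that their main terms are reduced fractions with the same odd denominator and hence already $\ge 1/(r+s) \gg 1/\sqrt D$ apart. Second, the constant $4$ in the main term is load-bearing rather than cosmetic: with $r+s \asymp \sqrt D$ one has $4/((r_1+s_1)(r_2+s_2)) \gg 1/D$ while the error is of size at most $\sim 1/D$, so any smaller constant would leave the estimate borderline or negative. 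Once the $\gg 1/D$ separation is secured in the window $(D,2D]$, the appeal to Davenport--Halberstam is immediate and the proof is complete.
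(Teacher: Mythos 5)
Your proposal is correct and takes essentially the same route as the paper: the paper also deduces the lemma by combining the Davenport--Halberstam large sieve with the $\gg 1/D$ spacing of the points $v/d$ obtained from the representations $d = r^2 + rs + s^2$, $(r,s)=1$. One small numerical slip: in the window $D < d \le 2D$ the identity $4d = (r-s)^2 + 3(r+s)^2$ gives $r+s \in (\sqrt{D}, \sqrt{8D/3}\,]$, so the ratio of two values of $r+s$ can be as large as $\sqrt{8/3} > \sqrt{2}$, but since, as you note, all that matters is that this ratio stays below $4$, your argument is unaffected.
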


Applying Cauchy's inequality yields

\begin{proposition}
For all $\alpha_1, \alpha_2, \dots \in\CC$, we have that
\begin{equation}
\underset{d\equiv 1\pmod{2}}{\sum_{D <  d\le 2D}}\sum_{v^2 + 3\equiv 0\pmod{d}} \left\lvert\sum_{n\le N} 
\alpha_n e\left(\frac{vn}{d}\right)\right\rvert\ll D^{1/2}(D + N)^{1/2} \left(\sum_{n}\alpha_n^2\right)^{1/2}.
\end{equation}
\end{proposition}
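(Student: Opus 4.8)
The plan is to derive Proposition 2 from Lemma 1 by a routine application of the Cauchy--Schwarz inequality, exploiting the fact that the number of terms in the outer double sum is controlled. First I would observe that the left-hand side of~(3.1) is a sum of the quantities
$$
\left\lvert\sum_{n\le N}\alpha_n e\!\left(\frac{vn}{d}\right)\right\rvert
$$
taken over pairs $(d,v)$ with $D<d\le 2D$, $d$ odd, and $v^2+3\equiv 0\pmod d$. This is precisely a sum of $|S_{d,v}|$ over a finite index set, and Lemma 1 bounds $\sum |S_{d,v}|^2$.

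Next I would apply Cauchy--Schwarz in the form $\sum_{(d,v)} |S_{d,v}| \le K^{1/2}\bigl(\sum_{(d,v)} |S_{d,v}|^2\bigr)^{1/2}$, where $K$ is the number of admissible pairs $(d,v)$. The second factor is bounded directly by Lemma 1, giving $\bigl((D+N)\sum_n \alpha_n^2\bigr)^{1/2}$. For the counting factor $K$, I would note that each odd $d$ in $(D,2D]$ contributes $\rho(d)$ roots, and that $\sum_{d\le 2D}\rho(d)\ll D$ since $\rho$ is multiplicative with average value bounded by a constant (it is essentially the number of representations data already used implicitly in defining $M_d$). Hence $K\ll D$, so $K^{1/2}\ll D^{1/2}$, and multiplying the two factors yields exactly the claimed bound
$$
D^{1/2}(D+N)^{1/2}\Bigl(\sum_n \alpha_n^2\Bigr)^{1/2}.
$$

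The only point requiring genuine care, rather than mechanical manipulation, is the estimate $K\ll D$ for the number of roots summed over the dyadic range. This is where one must know that $v^2+3\equiv 0\pmod d$ does not have too many solutions on average; the bound $\sum_{d\le 2D}\rho(d)\ll D$ follows from the multiplicativity of $\rho$ together with the Dirichlet-series identity $\sum_d \rho(d) d^{-s} = \zeta(s) L(s,\chi_{-3})/\zeta(2s)$ near $s=1$, or equivalently from the fact that $\rho$ counts ideals of given norm in $\ZZ[\omega]$. I expect this counting step to be the main (though still standard) obstacle; once it is in hand, the Cauchy--Schwarz combination with Lemma 1 is immediate and the proposition follows.
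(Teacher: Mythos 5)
Your proposal is correct and is essentially the paper's own proof: the paper derives Proposition 2 from Lemma 1 by exactly this application of Cauchy's inequality, leaving the counting of pairs $(d,v)$ implicit. Your bound $K\ll D$ is the right supporting detail, and it also follows directly from the paper's parametrization of the roots $v$ by primitive representations $d=r^2+rs+s^2$, which gives at most $O(D)$ lattice points $(r,s)$ for $d$ in a dyadic range.
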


Now, let $$\rho_h(d) = \sum_{v^2 + 3\equiv 0\pmod{d}} e\left(\frac{vh}{d}\right).$$

Then, the following holds: 

\begin{proposition}
\begin{equation}
\sum_{d\le D}\left\lvert\sum_{h\le N}\alpha_h\rho_h{d}\right\rvert\ll D^{1/2}(D + N)^{1/2}\left(\sum_{n}\alpha_n^2\right)^{1/2}.
\end{equation}
\end{proposition}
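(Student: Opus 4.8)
The plan is to expand $\rho_h(d)$, move the absolute value inside the sum over the roots $v$, cut the range of $d$ into dyadic blocks, and on each block invoke the $L^1$ large-sieve bound of the preceding proposition; summing the resulting geometric series then recovers the stated estimate.

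First I would interchange summation. Since $\rho_h(d)=\sum_{v^2+3\equiv 0\pmod d}e(vh/d)$,
$$\sum_{h\le N}\alpha_h\rho_h(d)=\sum_{v^2+3\equiv 0\pmod d}\ \sum_{h\le N}\alpha_h e\!\left(\frac{vh}{d}\right),$$
so the triangle inequality gives
$$\sum_{d\le D}\left\lvert\sum_{h\le N}\alpha_h\rho_h(d)\right\rvert\le \sum_{d\le D}\ \sum_{v^2+3\equiv 0\pmod d}\left\lvert\sum_{h\le N}\alpha_h e\!\left(\frac{vh}{d}\right)\right\rvert .$$
This is precisely the quantity controlled by (3.1), save that the range $d\le D$ is not dyadic and includes even $d$. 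I would dispose of the first point by writing $(0,D]$ as the union of the intervals $(D_j,2D_j]$ with $D_j=D2^{-j-1}$, $j\ge0$. Restricting to odd $d$, (3.1) bounds the $j$-th block by $\ll D_j^{1/2}(D_j+N)^{1/2}(\sum_n\alpha_n^2)^{1/2}$; using $(D_j+N)^{1/2}\le D_j^{1/2}+N^{1/2}$ together with $\sum_j D_j\ll D$ and $\sum_j D_j^{1/2}\ll D^{1/2}$ sums the geometric series to $\ll D^{1/2}(D+N)^{1/2}(\sum_n\alpha_n^2)^{1/2}$, which is the desired bound for the odd moduli.

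The even moduli are the one genuinely new point, since the spacing argument --- and hence Lemma 1 and (3.1) --- was carried out only for odd $d$. I would first note that $v^2+3\equiv 5\pmod 8$ is insoluble, so $\rho_h(d)=0$ unless $d=2^a m$ with $m$ odd and $a\in\{1,2\}$. For such $d$, the Chinese remainder theorem writes each root $v$ of $v^2+3\equiv0\pmod{2^am}$ as $v\equiv w\pmod m$ with $w^2+3\equiv0\pmod m$; reducing to $0\le v<2^am$ gives $v/(2^am)=w/(2^am)+b/2^a$ for some $b\in\{0,\dots,2^a-1\}$, whence
$$\left\lvert\sum_{h\le N}\alpha_h e\!\left(\frac{vh}{2^am}\right)\right\rvert=\left\lvert\sum_{h\le N}\beta_h e\!\left(\frac{wh}{2^am}\right)\right\rvert,\qquad \beta_h=\alpha_h e\!\left(\frac{bh}{2^a}\right),$$
with $\sum_h|\beta_h|^2=\sum_h|\alpha_h|^2$. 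Splitting the sum over roots according to the $O(1)$ possible values of $b$ freezes the coefficient vector $\beta$, and the frequencies $w/(2^am)$ are $2^{-a}$ times the odd-modulus root frequencies $w/m$, hence still $\gg1/D$ separated within a dyadic block (rescaling by $2^{-a}$ shrinks each gap by at most a factor $2^a$). The Davenport--Halberstam large sieve followed by Cauchy--Schwarz then yields a bound of exactly the shape of (3.1) for these twisted sums, and the dyadic summation concludes as before.

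I expect the only real obstacle to be this handling of the even $d$: one must check that peeling off the $2$-part via the Chinese remainder theorem merely twists the coefficients by a root of unity (so $\sum_n\alpha_n^2$ is unchanged) and rescales the frequencies by $2^{-a}$ (so well-spacedness survives up to a constant), which is what allows the full large-sieve input to be reused. Granting this, adding the odd and even contributions gives the asserted inequality.
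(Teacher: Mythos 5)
Your proof is correct and follows exactly the route the paper intends: expand $\rho_h(d)$ over the roots $v$, apply the triangle inequality, decompose $d\le D$ into dyadic blocks, invoke Proposition 2 on each block, and sum the geometric series. (The paper in fact offers no proof of this proposition at all, stating it as an immediate consequence of Proposition 2.) Your handling of the even moduli is a genuine improvement on the paper rather than a deviation from it: Lemma 1 and Proposition 2 are stated only for odd $d$, whereas this proposition sums over all $d\le D$, and the paper silently ignores the discrepancy; your argument --- that $\rho_h(d)=0$ once $8\mid d$, and that peeling off the $2$-part by the Chinese remainder theorem only twists the coefficients by a unimodular factor and rescales the frequencies so that $\|x/2^a\|\ge 2^{-a}\|x\|$ preserves the $\gg 1/D$ spacing --- correctly fills that gap, which is not an idle point since the application in Proposition 4 runs over moduli of the form $4d$.
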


Now, we prove that (2.1) holds by proving the following: 

\begin{proposition}
For all $D\le x$
\begin{equation}
\sum_{d\le D} |r_d(x)| \ll D^{1/4} x^{3/4 + \epsilon}.
\end{equation}

\end{proposition}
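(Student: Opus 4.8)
The plan is to follow the treatment of the error term in Section~20.4 of \cite{FI2}, working in $\QQ(\omega)$: I would open $A_d(x)$ as a sum over the representations $4n = r^2 + 3s^2$, complete the inner sum over the smooth variable $s$, and recognise the resulting exponential sums as the root sums controlled by the large sieve estimates (3.1) and (3.2). Writing $q = 4d$ and recalling $a_n = \sum_{r^2 + 3s^2 = 4n}\gamma_r$, I would first write
$$A_d(x) = \sum_{r}\gamma_r\,\#\bigl\{s\in\ZZ : r^2 + 3s^2 \le 4x,\ r^2 + 3s^2\equiv 0\pod q\bigr\},$$
where $r$ runs over odd primes and, for each admissible $r$, the variable $s$ ranges over $|s|\le S_r$ with $S_r = \sqrt{(4x - r^2)/3}$. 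For $(r,q) = 1$ and $3\nmid d$ the congruence $3s^2\equiv -r^2\pod q$ has exactly $\rho(q)$ solutions, parametrised by $s\equiv \bar 3\,v\,r\pod q$ as $v$ runs over the roots of $v^2 + 3\equiv 0\pod q$; the sparse families with $r\mid d$ or $3\mid d$ I would dispose of separately, as they contribute a lower-order amount.

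Next I would complete the sum over $s$ in each residue class, using a Beurling--Selberg majorant and minorant of the indicator $\chi_{S_r}=\mathbf 1_{[-S_r,S_r]}$ with Fourier support in $[-H,H]$ so as to keep a sharp cutoff. The frequency-zero term reassembles $\frac{\rho(q)}{q}$ times the smooth count of $s$, which is precisely the main term $M_d(x)$, while the nonzero frequencies give
$$r_d(x) = \frac{1}{q}\sum_{0<|h|\le H}\ \sum_{r}\gamma_r\,\widehat{\chi}_{S_r}\!\Bigl(\tfrac{h}{q}\Bigr)\,\rho_{\bar 3\,rh}(q),$$
up to a truncation error that becomes negligible once $H \asymp D x^{\epsilon}$ (after summation over $d$ and $r$). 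Here $\widehat{\chi}_{S_r}(h/q) = \int_{-S_r}^{S_r} e(ht/q)\,dt$ satisfies $|\widehat{\chi}_{S_r}(h/q)|\le \min\bigl(2S_r,\ q/\pi|h|\bigr)$, and $\rho_{\bar 3\,rh}(q) = \sum_{v^2+3\equiv 0\,(q)} e\bigl(\bar 3\,rh\,v/q\bigr)$ is a root sum of exactly the type appearing in (3.2).

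It then remains to estimate $\sum_{d\le D}|r_d(x)|$. I would split $d$ and $|h|$ into dyadic ranges $d\sim D'$ and $|h|\sim H'$, and remove the $d$-dependent smooth weight $\widehat{\chi}_{S_r}(h/q)$ by partial summation in $h$, which reduces matters to the large sieve applied to the flat coefficients $\gamma_r$ truncated by a sharp cutoff, uniformly in the cutoff. Absorbing the harmless inverse $\bar 3$ into the root (which only replaces $v^2+3$ by the $\QQ(\omega)$-equivalent form $3v^2+1$, for which the same well-spacing input holds), the phase becomes $e(v n/q)$ with the single genuine index $n = rh$ running up to $N\asymp \sqrt x\,H'$, and coefficients $\alpha_n = \sum_{rh=n}\gamma_r$. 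A divisor bound gives $\sum_n|\alpha_n|^2\ll x^{\epsilon}\sqrt x\,H'$, so that (3.1) yields, for each dyadic block, a contribution $\ll D'^{-1/2}(D'+\sqrt x H')^{1/2}\,\min(\sqrt x,\,D'/H')\,x^{1/4}H'^{1/2}x^{\epsilon}$; summing this over the $O(\log x)$ dyadic values of $H'$ and $D'$ produces a bound of the shape $D^{1/4}x^{3/4+\epsilon}$, as claimed.

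The main obstacle is precisely the passage from $A_d(x)$ to a clean large-sieve input. Two features fight against a direct application of (3.1)--(3.2): the index $\bar 3\,rh$ of the root sum is reduced modulo $q = 4d$ and is therefore entangled with the modulus, and the completion weight $\widehat{\chi}_{S_r}(h/q)$ genuinely depends on $d$. The first is resolved by carrying out the reduction-theoretic (spacing) input in $\QQ(\omega)$, where the factor $\bar 3$ corresponds only to the ramified prime and is immaterial; the second is resolved by the dyadic localisation in $d$ together with partial summation in $h$, which strips the weight off the coefficients before the large sieve is invoked. Once $r_d(x)$ has been put in this form the remaining estimate is a mechanical balancing of exponents, and in fact a little more than the stated bound comes out.
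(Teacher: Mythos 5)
Your skeleton coincides with the paper's: parametrise the congruence $r^2+3s^2\equiv 0\pmod{4d}$ by the roots $v$ of $v^2+3\equiv 0$, complete the sum over $s$, identify $M_d(x)$ with the zero frequency, and estimate the nonzero frequencies with the quadratic-roots large sieve (Proposition 3, display (3.2)) applied to coefficients $\alpha_n=\sum_{rh=n}\gamma_r$ supported on $n\ll\sqrt x H'$ (the paper regularises with a $C^\infty$ cutoff and Poisson summation where you use Beurling--Selberg majorants; that difference is cosmetic). The genuine gap is in the step you yourself flag as the main obstacle. You remove the $d$-dependent weight $\widehat\chi_{S_r}(h/q)$ ``by partial summation in $h$'' at the cost of its size $\min(2S_r,q/\pi|h|)$; this is exactly where the factor $\min(\sqrt x, D'/H')$ in your per-block bound comes from. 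But partial summation costs the total variation of the weight, not its supremum, and $\widehat\chi_{S_r}(h/q)=\frac{q}{\pi h}\sin(2\pi S_r h/q)$ oscillates in $h$ at frequency $S_r/q\asymp\sqrt x/D'$, so its variation over a block $h\sim H'$ is of order $S_r\asymp\sqrt x$ as soon as $H'\gg D'/\sqrt x$ --- precisely the blocks that dominate. Moreover the weight depends on $r$ as well, so partial summation in $h$ alone leaves an $(r,d)$-dependent kernel inside the $r$-sum; pulling it out forces absolute values inside that sum and destroys the bilinear structure in $n=rh$ that Proposition 3 needs. With the honest cost $\sqrt x$ in place of $D'/H'$, your top block $H'\asymp Dx^{\epsilon}$ contributes about $xD^{1/2}$, which exceeds the target $D^{1/4}x^{3/4}$ for every $D$ (indeed it is worse than the trivial bound when $D$ is large). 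The argument as written does not close, and the fact that you obtain the stronger bound $D^{1/2}x^{1/2}$ is a symptom of the unjustified step: the true limit of this method is $D^{1/4}x^{3/4}$.

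The repair is the paper's device, and it dictates both the correct decoupling and a much smaller truncation. Write $\widehat\chi_{S_r}(h/q)=\int_{|t|\le S_r}e(ht/q)\,dt$ and substitute $t=w\sqrt x/h$ (the analogue of the paper's (3.4)): the phase becomes $e(w\sqrt x/(4d))$, which depends only on $(w,d)$ and is bounded by $1$, while the amplitude $\frac{\sqrt x}{|h|}\mathbf 1_{\{|w|\le S_r|h|/\sqrt x\}}$ depends only on $(r,h,w)$. For each fixed $w$ the coefficients are $d$-free, Proposition 3 applies, and one gets the per-block bound $x^{3/4+\epsilon}H'^{1/2}D'^{-1/2}(D'+\sqrt xH')^{1/2}$, i.e.\ your bound with $\min(\sqrt x,D'/H')$ replaced by $\sqrt x$. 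Summing over $H'\le H$ and balancing against the Beurling--Selberg completion error, which is $\ll x^{1/2+\epsilon}/\delta$ for bandwidth $\delta$ (so $H\asymp\delta D$), forces $\delta\asymp x^{-1/4}D^{-1/4}$, i.e.\ $H\asymp x^{-1/4}D^{3/4}$, and then both terms are $\ll D^{1/4}x^{3/4+\epsilon}$. This is the same optimisation as the paper's choice of smoothing scale $y=x^{3/4}D^{1/4}$ (your $\delta$ corresponds to $\sqrt x/y$), and it is where the exponent in Proposition 4 actually comes from; your truncation $H\asymp Dx^{\epsilon}$ is far too large. One genuine simplification of your route does survive this correction: band-limited majorants make the frequency sum finite by construction, so you never need the second, twice-integrated-by-parts estimate (3.5) that the paper requires to control the tail of the Poisson summation.
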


\begin{proof}

Note that $$A_d(x) = \underset{\frac{r^2 + 3s^2}{4} \equiv 0\pmod{d}}{\sum_{\frac{r^2 + 3s^2}{4} \le x}} \gamma_r.$$

It is more convenient for now to consider only the contribution of the terms with $(r, d) = 1$. 
To that end, note that it is possible to replace $A_d(x)$ with
$$A_d^*(x) = \us{(r, d) = 1}{\us{\frac{r^2 + 3s^2}{4}\equiv 0\pmod{d}}{\sum_{\frac{r^2 + 3s^2}{4} \le x}}}\gamma_r$$
since 
$$\sum_{d\le D} |A_d(x) - A_d^*(x)| \le 
\sum_{d\le D} \sum_{\ell|d} |\gamma_\ell|\us{r^2 + 3s^2\equiv 0\pmod{4d}}{\sum_{r^2 + 3s^2\le 4x}} 1$$
$$ \le \sum_{\ell} |\gamma_\ell|\us{r^2 + 3s^2\equiv 0\pmod{4d}}{\sum_{r^2 + 3\le 4xs^{-2}}} \tau(r^2 + 3) \ll x^{1/2 + \epsilon}.$$

Now, rather than approximating $A_d^*(x)$, we shall approximate 
$$A_d^*(f) = \us{(r, d) = 1}{\sum_{r^2 + 3s^2 \equiv 0\pmod{4d}}} \gamma_rf\left(\frac{r^2 + 3s^2}{4}\right)$$ 

for some smooth $f$ supported on $[1, x]$ satisfying $$ f(u) = 1, \text{ for } y\le u \le x - y$$ $$f^{(j)}(x)\ll x^{-j}$$ 
where $y = \min\{x^{3/4}D^{1/4}, \frac{1}{2}x\}$. Note that bounding this is sufficient, since 
$$\sum_{d\le D}|A_d^*(f) - A_d^*(x)| \le \sum_{\ell^2 - \ell m + m^2\in I} \tau(\ell^2 - \ell m + m^2)\ll yx^{\epsilon}$$
where $I = \ZZ\cap([1, y]\cup[x - y, x])$. Note that since $\gamma_r$ is supported on odd primes, we have that 
$$A_d^*(f) = \sum_{v^2 + 3\equiv 0\pmod{4d}}\sum_{(r, d) = 1}\gamma_r\sum_{s\equiv vr\pmod{4d}} f\left(\frac{r^2 + 3s^2}{4}\right).$$
Now, let $$A_d(f) = \sum_{v^2 + 3\equiv 0\pmod{4d}}\sum_{r}\gamma_r\sum_{s\equiv vr\pmod{4d}} f\left(\frac{r^2 + 3s^2}{4}\right).$$
We can replace $A_d^*(f)$ with $A_d(f)$ with an error of $O(y\log x)$, which is small enough. 
We then have that by Poisson's formula $$A_d(f)= \frac{1}{4d}\sum_{r}\gamma_r\sum_{k\in\ZZ}\rho_{kr}(4d)F_r\left(\frac{k}{4d}\right)$$
where $$F_r(v) = \int_\RR f\left(\frac{r^2 + 3t^2}{4}\right) e(-vt) dt = 
2\int_0^{\infty} f\left(\frac{r^2 + 3t^2}{4}\right) \cos(2\pi vt) dt.$$

Note that the the contribution from when $k = 0$ is equal to $M_d(x) + O(y)$, so it is necessary and sufficient to bound the 
contribution from $k\ne 0$. To that end, note that by the change of variable $t = w\sqrt{x}/k,$ 
\begin{equation}
F_r\left(\frac{k}{4d}\right) = 
\frac{2\sqrt{x}}{k}\int_0^{\infty}f\left(\frac{r^2 + \frac{3xw^2}{k^2}}{4}\right)\cos\left(\frac{2\pi w\sqrt{x}}{4d}\right) dw.
\end{equation}
Integrating by parts twice yields that this equals
\begin{equation}
\frac{16\sqrt{x} d^2}{\pi^2k^3}\int_0^\infty\left(f^{'} + \frac{2w^2x}{k^2}f^{''}\right)\left(\frac{r^2 + \frac{3xw^2}{k^2}}{4}\right)
\cos\left(\frac{\pi w\sqrt{x}}{2d}\right) dw.
\end{equation}

Now, let 
$$R(f, D) = \sum_{D < d \le 2D} 
\left\lvert\frac{1}{4d}\sum_{r}\gamma_r\sum_{k\in\ZZ\setminus\{0\}}\rho_{kr}(4d)F_r\left(\frac{k}{4d}\right)\right\rvert.$$
We then have that 
$$R(f, D)\ll \frac{1}{D}\sum_{D < d \le 2D}\left\lvert\us{kr \ne 0}{\sum\sum}\gamma_rF_r\left(\frac{k}{4d}\right)\right\rvert.$$

To estimate this, we split this into sums with $|k|$ restricted to certain ranges. In particular, we write 
$$R_k(f, D) = \frac{1}{D}\sum_{D < d \le 2D} 
\left\lvert\sum_{2^k \le |k| < 2^{k + 1}}\sum_{r}\gamma_rF_r\left(\frac{k}{4d}\right) \right\rvert.$$

Then, we have that by $(3.4)$ and Proposition 3, $R_n(f, D)$ is

$$\frac{1}{D}\sum_{D < d \le 2D} 
\left\lvert\sum_{2^n \le |k| < 2^{n + 1}}\sum_{r}\gamma_r\rho_{kr}(d)\frac{2\sqrt{x}}{k}
\int_0^{\infty}f\left(\frac{r^2 + \frac{3xw^2}{k^2}}{4}\right)\cos\left(\frac{\pi w\sqrt{x}}{2d}\right) dw\right\rvert$$
$$\ll \frac{\sqrt{x}}{D}\int_0^{2^{n + 1}} \sum_{D < d \le 2D}
\left\lvert\sum_{2^n \le |k| < 2^{n + 1}}\sum_{r}\gamma_r\rho_{kr}(d)f\left(\frac{r^2 + \frac{3xw^2}{k^2}}{4}\right)\right\rvert dw$$
$$\ll \frac{x^{1/2 + \epsilon}}{D}D^{1/2}(D + 2^n\sqrt{x})^{1/2}(2^n\sqrt{x})^{1/2}.$$

Similarly, we also have that by $(3.5)$ and Proposition 3
$R_n(f, D)$ is  $$\ll \frac{D\sqrt{x}}{2^{3n}}\int_{0}^{2^{n + 1}}\sum_{D < d\le 2D}\left\lvert\sum_{2^n\le |k| < 2^{n + 1}} 
\sum_{r}\gamma_{r}\rho_{kr}(d)\left(f^{'}+\frac{2w^2x}{k^2}f^{''}\right)\left(\frac{r^2 + \frac{3xw^2}{k^2}}{4}\right)\right\rvert dw$$
$$\ll \frac{x^{3/2 + \epsilon}D^{3/2}}{y^22^{2n}}(D + 2^n\sqrt{x})^{1/2}(2^n\sqrt{x})^{1/2}$$

by Proposition 2.

Proposition 4 then follows from summing over all $n$.

\end{proof}

\section{The bilinear form}

Now, we shall bound the bilinear form in (2.2) by estimating the following sum:

\begin{equation} 
B_1(M, N) = \sum_{N\le n\le N'}\left\lvert\sum_{M < m\le M'}a_{mn}\mu(m)\right\rvert
\end{equation}

for some unspecified $M < M'\le 2M, N < N'\le 2N$ by showing the following: 

\begin{proposition}

For $\delta$ a sufficiently small positive number, we have that 
\begin{equation}
B(M, N) \ll MN(\log MN)^{-A}
\end{equation}

for all $A > 0$, where M = $N^{\delta}$.
\end{proposition}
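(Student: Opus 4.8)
The plan is to mirror the treatment of the bilinear form in Section~20.4 of \cite{FI2}, working in the Eisenstein integers $\ZZ[\omega]$ in place of $\ZZ[i]$. Since $\QQ(\omega)$ has class number one, each representation $4mn = r^2 + 3s^2$ corresponds, up to the six units and complex conjugation, to an Eisenstein integer $z$ with norm $N(z) = mn$ and $2\Re(z) = r$ (recall $4N(a+b\omega) = (2a-b)^2 + 3b^2$); thus $a_{mn} = \sum_{N(z)=mn}\gamma_{2\Re(z)}$ after setting aside the ramified primes $2$ and $3$. Because $\mu(m)$ restricts $m$ to squarefree values, I would factor $z = z_1 z_2$ with $N(z_1) = m$ a product of distinct split primes of $\ZZ[\omega]$ and $N(z_2) = n$, so that $B_1(M,N)$ becomes a bilinear form over $z_1$ (with $N(z_1)\sim M$) and $z_2$ (with $N(z_2)\sim N$), carrying the coefficient $\mu(N(z_1))$ and the prime weight $\gamma$ evaluated at $2\Re(z_1 z_2)$, which is a bilinear form in the rational-integer coordinates of $z_1$ and $z_2$.

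First I would record the obstruction that dictates the whole argument. If the weight $\gamma$ is kept as a prime-detecting weight and one applies Cauchy--Schwarz in the long variable $z_2$, the off-diagonal terms become correlations of the two linear forms $2\Re(z_1 z_2)$ and $2\Re(z_1' z_2)$ at prime values, a Hardy--Littlewood prime-pair count that we cannot evaluate; without evaluating it the argument only returns the trivial bound $MN$. The resolution, following \cite{FI2}, is to break the prime weight first: apply a Vaughan-- or Heath--Brown-type identity to $\gamma_r$ in the variable $r = 2\Re(z_1 z_2)$, splitting it into a bounded number of Type~I pieces (where $r$ is confined to a progression $0\pmod{d}$) and Type~II pieces (a genuine factorization $r = de$). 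The divisibility and congruence conditions that result are detected by additive characters modulo the moduli attached to the representation, which is exactly where the exponential sums $\rho_h(d)$ and $e(vn/d)$ of Section~3 reappear, so that Propositions~2 and~3 together with the well-spacing $\lVert v_1/d_1 - v_2/d_2\rVert \gg 1/D$ become available.

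Concretely I would proceed as follows: (i) reduce to the bilinear form over $z_1,z_2$ and split dyadically, peeling off the units and the ramified primes $2,3$; (ii) insert the combinatorial decomposition of $\gamma$ together with the character detection of the congruences, producing multilinear exponential sums in the coordinates of $z_1$ and $z_2$; (iii) estimate the Type~I contribution through the equidistribution of the bilinear form $2\Re(z_1 z_2)$ in arithmetic progressions; (iv) estimate the Type~II contribution by Cauchy--Schwarz followed by the large sieve of Propositions~2 and~3, using the spacing of the frequencies $v/d$; and (v) collect the main terms and exploit the cancellation of $\sum_{m}\mu(m)$ against the smooth multiplicative main density to gain an arbitrary power of $\log$. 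The main obstacle is step~(iv): one must control two independent bilinear structures at once---the Eisenstein factorization $z = z_1 z_2$ and the factorization $r = de$ of the real part coming from the decomposition of $\gamma$---and show that the associated real-part frequencies are sufficiently well spaced for the large sieve to beat the trivial bound, uniformly for $M = N^{\delta}$. Establishing this spacing (a two-variable analogue of the estimate proved in Section~3) and tracking the saving $(\log MN)^{-A}$ through the Möbius sum, while keeping the ramified primes and the six units under control, is where the real work lies.
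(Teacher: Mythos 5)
Your proposal diverges from the paper at the very first structural decision, and the ``obstruction'' you invoke to justify that divergence is not real. You claim that keeping the prime weight $\gamma$ and applying Cauchy--Schwarz in the long variable produces ``a Hardy--Littlewood prime-pair count that we cannot evaluate.'' This overlooks the key point, which is exactly what the paper (following Section 20.4 of \cite{FI2}) exploits: the Eisenstein variable $\mathfrak{n}$ is \emph{two-dimensional}. After Cauchy--Schwarz one faces
$$S(\mathfrak{m}_1,\mathfrak{m}_2)=\sum_{N<N(\mathfrak{n})\le N'}\gamma(\mathfrak{n}\mathfrak{m}_1)\gamma(\mathfrak{n}\mathfrak{m}_2),$$
and writing $\ell_1 = \mathfrak{n}\mathfrak{m}_1 + \overline{\mathfrak{n}}\,\overline{\mathfrak{m}}_1$, $\ell_2 = \mathfrak{n}\mathfrak{m}_2 + \overline{\mathfrak{n}}\,\overline{\mathfrak{m}}_2$, the linear map $\mathfrak{n}\mapsto(\ell_1,\ell_2)$ is invertible whenever $\Delta = i(\mathfrak{m}_1\overline{\mathfrak{m}}_2-\overline{\mathfrak{m}}_1\mathfrak{m}_2)\neq 0$; indeed $\overline{\mathfrak{n}} = i(\ell_1\mathfrak{m}_2-\ell_2\mathfrak{m}_1)/\Delta$. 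Hence the off-diagonal terms are not correlations of primes along a one-parameter family (a twin-prime-type problem); they are sums over two \emph{independent} prime variables $\ell_1,\ell_2$, constrained only by the congruence $\ell_1\mathfrak{m}_2\equiv\ell_2\mathfrak{m}_1\pmod{\Delta}$ and a size condition. Averaged over the moduli $\Delta$, this is a Bombieri--Vinogradov-type statement for the product $\gamma_{\ell_1}\gamma_{\ell_2}$, which is precisely Proposition 6 (quoted from \cite{FI1}); the paper uses it to replace the congruence by the density $\eta(\Delta)/\phi(\Delta)$, bounds the sparse diagonal $\Delta=0$ trivially, and then handles the remaining main-term sum $S_1$ over $\mathfrak{m}_1,\mathfrak{m}_2$ with $\mu(\mathfrak{m}_1)\mu(\mathfrak{m}_2)$ by orthogonality of characters of $\ZZ[\omega]/(d)$ together with the Siegel--Walfisz theorem for such characters (Proposition 7, from \cite{Gol}). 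That last step is the only place M\"obius cancellation enters, roughly your step (v).

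Because you turned away from this route, your replacement --- a Vaughan or Heath--Brown decomposition of $\gamma_r$ in the variable $r = 2\Re(z_1z_2)$, followed by the large sieve of Section 3 --- does not close, and it also misattributes to \cite{FI2} an argument that book does not make. The machinery of Section 3 (Weyl sums at the roots of $v^2+3\equiv 0\pmod{d}$, their well-spacing, Propositions 2 and 3) is built for the remainder-term hypothesis (2.1), where what is detected is divisibility of the \emph{norm} $(r^2+3s^2)/4$ by $d$; it says nothing about the distribution of the \emph{linear form} $2\Re(z_1z_2)$ in progressions modulo the divisors $d$ produced by a combinatorial identity, which is what your Type I and Type II pieces would require. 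You would thus face a genuinely new equidistribution problem (values of a bilinear form in progressions, twisted by M\"obius) with no tool provided for it --- and you concede as much by flagging the ``two-variable spacing'' as ``where the real work lies,'' work that is neither sketched in your proposal nor present anywhere in the paper. In short: the obstruction you invoke is illusory (you missed that the two-dimensionality of $\mathfrak{n}$ converts the post-Cauchy correlation into a count of independent prime pairs in arithmetic progressions), and the alternative route you propose is missing its central estimate.
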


\begin{proof}

First, note that it is sufficient to estimate 
\begin{equation}
B_1(M, N) = \sum_{N < n\le N'}\left\lvert\us{(m, n) = 1}{\sum_{M < m\le M'}}a_{mn}\mu(m)\right\rvert
\end{equation}

since if $(m, n) = d$, if $d < M^{1/2}$, we can just transfer the factor of $d$ to $n$, and otherwise use the trivial bound.

Write $\gamma(\mathfrak{a})$ to denote $\gamma_{2\Re\mathfrak{a}}$.

Note that we have that $$a_n = \sum_{N\mathfrak a = n}\gamma(\mathfrak a)$$ so by unique factorization in $\QQ(\omega)$, we have that
for relatively prime $m, n$, we have that $$a_{mn} = \frac{1}{6}\sum_{N\mathfrak m = m}\sum_{N\mathfrak n = n} \gamma(\mathfrak{mn})$$
where the factor of $1/6$ accounts for the six units $\pm 1, \pm\omega, \pm\omega^2$ in $\ZZ[\omega]$. It follows that 
$$B_1(M, N) = 
\frac{1}{6}\sum_{N < N(\mathfrak{n})\le N'}\left\lvert
\us{(\mathfrak{m, n}) = 1}{\sum_{M < N(\mathfrak{m})\le M'}}\gamma(\mathfrak{mn})\mu(\mathfrak{m})\right\rvert.$$

The coprimality condition can easily be dropped by a similar argument by which it was added, so it follows that it is sufficient to
show that $$B_2(M, N) = \sum_{N < N(\mathfrak{n})\le N'}\left\lvert
\sum_{M < N(\mathfrak{m})\le M'}\gamma(\mathfrak{mn})\mu(\mathfrak{m})\right\rvert \ll MN(\log MN)^{-A}$$
By Cauchy, we have that it is sufficient to show that $$B_3(M, N) = \sum_{N < N(\mathfrak{n})\le N'}\left\lvert
\sum_{M < N(\mathfrak{m})\le M'}\gamma(\mathfrak{mn})\mu(\mathfrak{m})\right\rvert^2 \ll M^2N(\log MN)^{-A}.$$

We then have that

$$B_3(M, N) = 
\sum_{M < N(\mathfrak{m}_1), N(\mathfrak{m}_2)\le M'} \mu(\mathfrak{m}_1)\mu(\mathfrak{m}_2)S(\mathfrak{m}_1, \mathfrak{m}_2)$$

where $$S(\mathfrak{m}_1, \mathfrak{m}_2) = \sum_{N < N(\mathfrak{n}) \le N'} \gamma(\mathfrak{nm}_1)\gamma(\mathfrak{nm}_2).$$

Now, let $\ell_1, \ell_2$ be such that
$$\mathfrak{nm}_1 + \conj{\mathfrak{n}}\conj{\mathfrak{m}}_1 = \ell_1$$
$$\mathfrak{nm}_2 + \conj{\mathfrak{n}}\conj{\mathfrak{m}}_2 = \ell_2$$

and let $\Delta(\mf m_1, \mf m_2) = \Delta = i(\mf m_1\conj{\mf m}_2 - \conj{\mf m}_1\mf m_2)$. 
Note that $\ell_1, \ell_2\le 4\sqrt{MN}$. 
When $\Delta = 0$, note that the contribution $B_0(M, N)$ satisfies
$$B_0(M, N) \ll N(\log N)^2\us{\Im \conj{\mf m}_1\mf m_2 = 0}{\sum\sum} 1$$
which is clearly $\ll NM^2(\log MN)^{-A}$. 

Otherwise, we have that $$\conj{\mf{n}} = \frac{i(\ell_1\mf m_2 - \ell_2\mf m_1)}{\Delta}$$
so it follows that $$\ell_1\mf m_2 \equiv \ell_2\mf m_1 \pmod{\Delta}$$ and that 
$$\Delta^2N < N(\ell_1\mf m_2 - \ell_2\mf m_1) \le \Delta^2N'.$$

It then follows that 
$$S(\mf m_1, \mf m_2) = 
\us{\Delta^2N < N(\ell_1\mf m_2 - \ell_2\mf m_1)\le \Delta^2N'}{\sum_{\ell_1\mf m_2\equiv \ell_2\mf m_1\pmod{\Delta}}}.
\gamma_{\ell_1}\gamma_{\ell_2}$$

Now, we state Proposition 20.9 in \cite{FI1}, which is used below: 
\begin{proposition}
$$\sum_{q\le Q}\us{y\in\RR}{\us{\mathfrak{a}\in\CC}{\us{a\in\ZZ, (a, q) = 1}{\max}}}
\left\lvert\us{\ell_1\equiv a\ell_2\pmod{q}}{\us{|\ell_1 - \mathfrak{a}\ell_2|\le y}{\us{\ell_1, \ell_2\le x}{\sum\sum}}}
\gamma_{\ell_1}\gamma_{\ell_2} 
- \phi(q)^{-1}\us{|\ell_1 - \mathfrak{a}\ell_2|\le y}{\us{\ell_1, \ell_2\le x}{\sum\sum}}\gamma_{\ell_1}\gamma_{\ell_2}\right\rvert
\ll x^2(\log x)^{-A}$$
where $Q = x(\log x)^{-B}$ for some $B > 0$ that depends on $A$.
\end{proposition}

We can split up $S(\mf m_1, \mf m_2)$ into classes restricted to $$\ell_1 \equiv a\ell_2\pmod{\Delta}$$ 
for $a\in(\ZZ/(\Delta))^*$ such that $a\mf m_2 \equiv \mf m_1\pmod{\Delta}$ and apply Proposition 6.
It then follows that $$B_0(M, N) \ll B_4(M, N) + O(NM^2(\log MN)^{-A})$$
where $$B_4(M, N) = \us{M < N(\mf m_1), N(\mf m_2) \le M'}{\sum\sum}\mu(\mf m_1)\mu(\mf m_2)\frac{\eta(\Delta)}{\phi(\Delta)}
\us{\Delta^2 N < N(\ell_1\mf m_2 - \ell_2\mf m_1)\le \Delta^2N'}{\us{\ell_1, \ell_2\le x}{\sum\sum}}\gamma_{\ell_1}\gamma_{\ell_2}$$
where $\eta(\Delta)$ is the total number of $a\in\left(\ZZ/(\Delta)\right)^*$ such that $a\mf m_2\equiv \mf m_1\pmod{\Delta}$.

By the prime number theorem, we have that the inner sum satisfies 
$$\us{\Delta^2 N < N(\ell_1\mf m_2 - \ell_2\mf m_1)\le \Delta^2N'}{\us{\ell_1, \ell_2\le x}{\sum\sum}}\gamma_{\ell_1}\gamma_{\ell_2}
= X + O(MN(\log MN)^{-A})$$
where $$X = \us{\Delta\sqrt{N} < |\ell_1\mf m_2 - \ell_2\mf m_1|\le \Delta\sqrt{N'}}{\int\int}d\ell_1d\ell_2 = 
|\Delta|\us{N < |u + \omega v|\le N'}{\int\int}du dv = \frac{1}{2}\pi\sqrt{3}|\Delta|(N' - N).$$

It therefore now remains to estimate 
$$S_1 = \us{M < N(\mf m_1), N(\mf m_2) \le M'}{\sum\sum}\mu(\mf m_1)\mu(\mf m_2)\frac{\eta(\Delta)|\Delta|}{\phi(\Delta)}.$$
Splitting this up for all $(\mf m_1, \mf m_2) = \mf d$, we then have that 
$$S_1 = \sum_{\mf d} \mu^2(d)\us{(\mf m_1, \mf m_2) = (\mf m_1\mf m_2) = 1}{\us{M < N(\mf m_1\mf d), N(\mf m_2\mf d)\le M'}{\sum\sum}}
\mu(\mf m_1\mf d)\mu(\mf m_2\mf d)\frac{\eta(\Delta N(\mf d))|\Delta|N(\mf d)}{\phi(\Delta N(\mf d))} $$
$$= \sum_{\mf d} \mu^2(d)\us{(\mf m_1, \mf m_2) = (\mf m_1\mf m_2) = 1}{\us{M < N(\mf m_1\mf d), N(\mf m_2\mf d)\le M'}{\sum\sum}}
\mu(\mf m_1)\mu(\mf m_2)\frac{\eta(\Delta N(\mf d))|\Delta|N(\mf d)}{\phi(\Delta N(\mf d))}.$$

Note that we have that 
$$\eta(\Delta N(\mf d)) = \us{a\equiv \mf m_2\mf m_1^{-1}\pmod{\conj{\mf d}\Delta}}{\sum_{a\in(\ZZ/(\Delta N(\mf d)))^*}} 1
= N(\mf d)\prod_{p|N(\mf d), p\nmid \Delta} \left(1 - \frac{1}{p}\right).$$

It then follows that 
$$S_1 = 
\sum_{\mf d} \mu^2(d)N(\mf d)\us{(\mf m_1, \mf m_2) = (\mf m_1\mf m_2) = 1}{\us{M < N(\mf m_1\mf d), N(\mf m_2\mf d)\le M'}{\sum\sum}}
\mu(\mf m_1)\mu(\mf m_2)\frac{|\Delta|}{\phi(\Delta)}.$$

By multiplicativity, we have that $$\frac{|\Delta|}{\phi(\Delta)} = \sum_{d | \Delta}\mu^2(d)\phi(d)^{-1}.$$

Using this and reversing the order of summation, we have that 
$$S_1 = 
\sum_{\mf d} \mu^2(d)N(\mf d)\us{(\mf m_1, \mf m_2) = (\mf m_1\mf m_2) = 1}{\us{M < N(\mf m_1\mf d), N(\mf m_2\mf d)\le M'}{\sum\sum}}
\mu(\mf m_1)\mu(\mf m_2)\sum_{d|\Delta}\mu^2(d)\phi(d)^{-1}
$$
$$ = 
\sum_{\mf d} \mu^2(d)N(\mf d) \sum_{d\le 2M}\phi(d)^{-1}
\us{\mf m_1\conj{\mf m}_2\equiv \conj{\mf m}_1\mf m_2\pmod{d}}
{\us{(\mf m_1, \mf m_2) = (\mf m_1\mf m_2) = 1}{\us{M < N(\mf m_1\mf d), N(\mf m_2\mf d)\le M'}{\sum\sum}}}\mu(\mf m_1)\mu(\mf m_2)$$
$$
= 
\sum_{\mf d} \mu^2(d)N(\mf d) \sum_{d\le 2M}\phi(d)^{-1}
\frac{1}{d}\sum_{\chi}
{\us{(\mf m_1, \mf m_2) = (\mf m_1\mf m_2) = 1}{\us{M < N(\mf m_1\mf d), N(\mf m_2\mf d)\le M'}{\sum\sum}}}\mu(\mf m_1)\mu(\mf m_2)
\psi(\mf m_1)\conj{\psi}{(\mf m_2)}.
$$

by orthogonality where $\chi$ runs over the characters of $\ZZ[\omega]/(d)$ and $\psi(\mf m) = \chi(\mf m)\conj{\chi}(\conj{\mf m}).$

To estimate this, we use the following version of the Siegel-Walfisz Theorem that follows from the main result in \cite{Gol}:

\begin{proposition}
For any character $\psi$ on ideals
$$\sum_{N(\mf m) \le x}\mu(\mf m)\psi(\mf m)\ll_A x(\log x)^{-A}$$
for all $A > 0$.
\end{proposition}

Now, let $$S_{\mf d, d, \psi}^*(M) = 
{\us{(\mf m_1, \mf m_2) = (\mf m_1\mf m_2, \mf d) = 1}{\us{M < N(\mf m_1\mf d), N(\mf m_2\mf d)\le M'}{\sum\sum}}}
\mu(\mf m_1)\mu(\mf m_2)\psi(\mf m_1)\conj{\psi}{(\mf m_2)}.$$

Then, it is easy to see that $$S_{\mf d, d, \psi}^*(M) = S_{\mf d, d, \psi}(M) + O(M^{1 + \epsilon})$$
where $$S_{\mf d, d, \psi}(M) = 
{\us{(\mf m_1, \mf m_2) = 1}{\us{M < N(\mf m_1\mf d), N(\mf m_2\mf d)\le M'}{\sum\sum}}}
\mu(\mf m_1)\mu(\mf m_2)\psi(\mf m_1)\conj{\psi}{(\mf m_2)}.$$

We then have that $$\sum_{\mf d_1\in\ZZ[\omega]\setminus\{0\}}\mu^{2}(\mf d_1)S_{\mf d, d, \psi}(M/N(\mf d_1))$$
$$= {\left({{\us{M < N(m_1\mf d)\le M'}{\sum}}}\mu(\mf m_1)\psi(\mf m_1)\right)}
\left({{\us{M < N(m_2\mf d)\le M'}{\sum}}}\mu(\mf m_2)\conj{\psi}(\mf m_2)\right)$$

so by a variant of M\"{o}bius inversion, we have that $$S_{\mf d, d, \psi}(M) \ll (M/N(\mf d))^2(\log M/N(\mf d))^{-A}.$$

The desired result then follows.
\end{proof}

\section{Acknowledgements}
The author is grateful to J. B. Friedlander for feedback regarding this paper. The author is especially grateful to D. Goldston for feedback and guidance on this paper.

\end{document}